\newcommand\cyr{%
\renewcommand\rmdefault{wncyr}%
\renewcommand\cFdefault{wncyss}%
\renewcommand\encodingdefault{OT2}%
\normalfont \selectfont} \DeclareTextFontCommand{\textcyr}{\cyr}
\newcommand{\be}{\begin{equation}}
\newcommand{\ee}{\end{equation}}
\newcommand{\bes}{\begin{equation*}}
\newcommand{\ees}{\end{equation*}}
\newcommand{\bH}{\mathbb{H}}
\newcommand{\N}{\mathbb{N}}
\newcommand{\R}{\mathbb{R}}
\newcommand{\A}{\mathbb{A}}
\newcommand{\cF}{\mathcal{F}}
\newcommand{\cL}{\mathcal{L}}
\newcommand{\Span}{\mathrm{span\,}}
\newcommand{\ml}{\vskip 5pt\noindent}
\newcommand{\wt}[1]{{\widetilde{#1}}}
\newcommand{\bx}{\boldsymbol{x}}
\newcommand{\mbS}{\mathbb{S}}
\renewcommand{\rmdefault}{cmr} 
\newtheorem{theorem}{Theorem}[section]
\theoremstyle{plain}
\newtheorem*{notation*}{Notation}
\numberwithin{equation}{section}
\DeclareMathOperator\Lip{Lip}
\DeclareMathOperator\Sc{Sc}
\DeclareMathOperator\Ve{Vec}
\DeclareMathOperator{\PV}{\mathrm{PV}}
\newcommand{\cFn}{\cF(X, \R_n)}
\newcommand{\n}[1]{{\left\|{#1}\right\|}}
\newcommand{\abs}[1]{\left\vert{#1}\right\vert}
\newcommand{\vertiii}[1]{{\left\vert\kern-0.25ex\left\vert\kern-0.25ex\left\vert #1 
    \right\vert\kern-0.25ex\right\vert\kern-0.25ex\right\vert}}
\begin{document}

\title[Clifford-Valued Fractal Interpolation]{Clifford-Valued Fractal Interpolation}

\author{Peter R. Massopust}
\address{Center of Mathematics, Technical University of Munich, Boltzmannstr. 3, 85748 Garching b. Munich, Germany}
\email{massopust@ma.tum.de}

\begin{abstract}
In this short note, we merge the areas of hypercomplex algebras with that of fractal interpolation and approximation. The outcome is a new holistic methodology that allows the modelling of phenomena exhibiting a complex self-referential geometry and which require for their description an underlying algebraic structure.
\end{abstract}
\keywords{Iterated function system (IFS), Banach space, fractal interpolation, Clifford Algebra, Clifford Analysis}%
\subjclass{28A80, 11E88, 15A66, 41A30, 46E15}

\maketitle 

\section{Introduction}
In this short note, we merge two areas of mathematics: the theory of hypercomplex algebras as exemplified by Clifford algebras and the theory of fractal approximation or interpolation.

In recent years, hypercomplex methodologies have found their way into many applications one of which is digital signal processing. See, for instance \cite{A,SW} and the references given therein. The main idea is to use the multidimensionality of hypercomplex algebras to model signals with multiple channels or images with multiple color values and to use the underlying algebraic structure of such algebras to operate on these signals or images. The results of these algebraic or analytic operations produce again elements of the hypercomplex algebra. This holistic approach cannot be performed in finite dimensional vector spaces as these do not possess an intrinsic algebraic structure. 

On the other hand, the concept of fractal interpolation has been employed successfully in numerous applied situations over the last decades. The main purpose of fractal interpolation or approximation is to take into account complex geometric self-referential structures and to employ approximants that are wellsuited to model these types of structures. These approximants or interpolants are elements of vector spaces and cannot be operated on in an algebraic way to produce the same type of object. Hence, the need for an extension of fractal interpolation to the hypercomplex setting. An initial investigation into the novel concept of hypercomplex iterated function system was already undertaken in \cite{m5} albeit along a different direction.

The structure of this paper is as follows. In Section 2, we give a brief introduction of Clifford algebras and mention a few items from Clifford analysis. In the third section, we review some techniques and state relevant results form the theory of fractal interpolation in Banach spaces. These techniques are then employed in Section 4 to a Clifford algebraic setting. The next section briefly mentions a special case of Clifford-valued fractal interpolation, namely that based on paravector-valued functions. In the last section, we provide a brief summary and mention future research directions.
\section{A Brief Introduction to Clifford Algebra and Analysis}\label{sec2}
In this section, we provide a terse introduction to the concept of Clifford algebra and analysis and introduce only those items that are relevant for the purposes of this paper. For more details about Clifford algebra and analysis, the interested reader is referred to, for instance, \cite{BRS,Clifford,Clifford2,GHS,Hyper,Krav} and to, i.e., \cite{CSS,CSS2,GHS1,HQW} for its ramifications. 

To this end, denote by $\{e_1, \ldots, e_n\}$ the canonical basis of the Euclidean vector space $\R^n$. The real Clifford algebra, $\R_n$, generated by $\R^n$ is defined by the multiplication rules 
\be\label{eq2.1}
e_i e_j + e_j e_i = -2 \delta_{ij}, \quad i,j\in \{1,\ldots, n\} =: \N_n, 
\ee
where $\delta_{ij}$ is the Kronecker symbol. 

An element $x\in \R_n$ can be represented in the form $x = \sum\limits_{A} x_A e_A$ with $x_A\in \R$ and $\{e_A : A\subseteq \N_n\}$, where $e_A := e_{i_1} e_{i_2} \cdots e_{i_m}$, $1\leq i_1 < \cdots < i_m \leq n$, and $e_\emptyset =: e_0 := 1$. Thus, the dimension of $\R_n$ regarded as a real vector space is $2^n$. The rules defined in \eqref{eq2.1} make $\R_n$ into a in general noncommutative algebra, i.e., a real vector space together with a bilinear operation $\R_n \times \R_n\to\R_n$.

A conjugation on Clifford numbers is defined by $\overline{x} := \sum\limits_{A} x_A \overline{e}_A$ where $\overline{e}_A := \overline{e}_{i_m} \cdots \overline{e}_{i_1}$ with $\overline{e}_i := -e_i$ for $i\in\N_n$, and $\overline{e}_0 := e_0 = 1$. In this context, one also has
\be\label{eq2}
e_0 e_0 = e_0 = 1\quad\text{and}\quad e_0 e_i = e_i e_0 = e_i. 
\ee
The Clifford norm of the Clifford number $x = \sum\limits_{A} x_A e_A$ is defined by 
\[
|x| := \left(\sum\limits_{A\subseteq\N_n} |x_A|^2\right)^{1/2}. 
\]

In the following, we consider Clifford-valued functions $f:G \subseteq\R^m\to\R_n$, where $G$ is a nonempty open domain. For this purpose, let $X$ be $G$ or any suitable subset of $\overline{G}$. Denote by $\cF(X)$ any of the following functions spaces: $C^k (X), C^{k,\alpha}(X), L^p(X), W^{s,p}(X), B^s_{p,q}(X), F^s_{p,q}(X)$ where
\begin{enumerate}
\item $C^k (X)$, $k\in\N_0 := \{0\}\cup\N$, is the Banach space of $k$-times continuously differentiable $\R$-valued functions;
\item $C^{k,\alpha}(X)$, $k\in\N_0$, $0 < \alpha \leq 1$, is the Banach space of $k$-times continuously differentiable $\R$-valued functions whose $k$-th derivative is Hölder continuous with Hölder exponent $\alpha$;
\item $L^p(X)$, $1\leq p < \infty$, are the Lebesgue spaces on $X$; 
\item $W^{s,p}(X)$, $s\in\N$ or $s>0$, $1\leq p < \infty$, are the Sobolev-Slobodeckij spaces.
\item $B^s_{p,q}(X)$, $1\leq p,q < \infty$, $s>0$, are the Besov spaces;
\item $F^s_{p,q}(X)$, $1\leq p,q < \infty$, $s>0$, are the Triebel-Lizorkin spaces.
\end{enumerate}

The  real vector space $\cF(X, \R_n)$ of $\R_n$-valued functions over $X$ is defined by 
\[
\cF(X, \R_n) := \cF(X) \otimes_\R \R_n. 
\]
This linear space becomes a Banach space when endowed with the norm
\[
\n{f} := \left(\sum_{A\subseteq\N_n} \n{f_A}^2_{\cF(X)}\right)^{1/2}.
\]

It is known \cite[Remark 2.2. and Proposition 2.3.]{GS} that $f\in \cF(X,\R_n)$ iff 
\be\label{eq2.3}
f = \sum_{A\subseteq\N_n} f_A e_A
\ee
with $f_A\in \cF(X)$. Furthermore, functions in $\cFn$ inherit all the topological properties such as continuity and differentiability from the functions $f_A\in\cF(X)$.
\section{Some Results From Fractal Interpolation Theory}\label{sec3}
In this section, we briefly summarize fractal interpolation and the 
Read-Bajrakterevi\'{c} operator. For a more detailed introduction to fractal geometry and its subarea of fractal interpolation, the interested reader is referred to the following, albeit incomplete, list of references: \cite{B1,B2,BHVV,bhm,bhm2,bedford,DLM,dubuc1,dubuc2,F,H,LDV,massopust1,SB,SB1}.

To this end, let $X$ be a nonempty bounded subset of the Banach space $\R^m$. Suppose we are given a finite family $\{L_i\}_{i = 1}^{N}$ of injective nontrivial contractions $X\to X$ generating a partition of $X$ in the sense that
\begin{align}
&\forall\;i, j\in \N_N, i\neq j: L_i(X)\cap L_j(X) = \emptyset;\label{c1}\\
&X = \bigcup_{i=1}^N L_i(X).\label{c2}
\end{align}
For simplicity, we write $X_i := L_i(X)$. Here and in the following, we always assume that $1<N\in\N$.

The purpose of fractal interpolation is to obtain a unique global function 
\[
\psi: X = \bigcup\limits_{i=1}^N X_i\to\R 
\]
belonging to some prescribed Banach space of functions $\cF(X)$ and satisfying $N$ functional equations of the form
\be\label{psieq}
\psi (L_i (x)) = q_i (x) + s_i (x) \psi (x), \quad x\in X,\ i\in \N_N,
\ee
where for each $i\in\N_N$, $q_i\in\cF(X)$ and $s_i:X\to\R$ are given functions. In addition, we require that $s_i$ is bounded and satisfies $s_i \cdot f\in\cF(X)$ for any $f\in\cF(X)$, i.e., $s_i$ is a multiplier for $\cF(X)$. It is worthwhile mentioning that Eqn. \eqref{psieq} reflects the self-referential or fractal nature of the global function $\psi$.

The idea behind obtaining $\psi$ is to consider \eqref{psieq} as a fixed point equation for an associated affine operator acting on $\cF(X)$ and to show that the fixed point - should it exist - is unique. (Cf. also \cite{SB}.)

For this purpose, define an affine operator $T: \cF(X)\to \cF(X)$, called a Read-Bajractarevi\'c (RB) operator, by 
\be\label{eq3.4}
T f  := q_i\circ L_i^{-1} + s_i\circ L_i^{-1}\cdot f\circ L_i^{-1}, 
\ee 
on $X_i$, $i\in \N_N$, or, equivalently, by
\begin{align*}
T f &= \sum_{i=1}^N q_i\circ L_i^{-1}\, \chi_{X_i} + \sum_{i=1}^N s_i\circ L_i^{-1}\cdot f\circ L_i^{-1}\, \chi_{X_i}\\
&= T(0) + \sum_{i=1}^N s_i\circ L_i^{-1}\cdot f\circ L_i^{-1}\, \chi_{X_i},\quad x\in X,
\end{align*}
where $\chi_S$ denotes the characteristic or indicator function of a set $S$: $\chi_S(x) = 1$, if $x\in S$, and $\chi_S(x) = 0$, otherwise. 

Then, \eqref{psieq} is equivalent to showing the existence of a unique fixed point $\psi$ of $T$: $T\psi = \psi$. The existence of a unique fixed point follows from the Banach Fixed Point Theorem once it has been shown that $T$ is a contraction on $\cF(X)$. 

The RB operator $T$ is a contraction on $\cF(X)$ if there exists a constant $\gamma_{\cF(X)}\in [0,1)$ such that for all $f,g\in\cF(X)$
\begin{align*}
\n{Tf - Tg}_{\cF(X)} &= \n{\sum_{i=1}^N s_i\circ L_i^{-1}\cdot (f-g)\circ L_i^{-1}\, \chi_{X_i}}_{\cF(X)}\\
& \leq \gamma_{\cF(X)} \n{f - g}_{\cF(X)}
\end{align*}
holds. Here, $\n{\cdot}_{\cF(X)}$ denotes the norm on $\cF(X)$.

Should such a unique fixed point $\psi$ exist then is termed a \emph{fractal function of type $\cF(X)$} as its graph is in general a fractal set. 

Now, let $\cF(X)$, for an appropriate $X$, denote one of the following Banach space of functions: the Lebesgue spaces $L^p(X)$, the smoothness spaces $C^k(X)$, Hölder spaces $C^{k,\alpha}(X)$, Sobolev-Slobodeckij spaces $W^{s,p}(X)$, Besov spaces $B^s_{p,q}(X)$, and Triebel-Lizorkin spaces $\cF^s_{p,q}(X)$.

The following results were established in a series of papers \cite{PRM,massopust, massopust1,m2,m3}.

\begin{theorem}
Let $L_i$, $i\in\N_N$, be defined as in \eqref{c1} and \eqref{c2}. Further let $q_i\in\cF(X)$ and let $s_i:X\to\R$ be bounded and a pointwise multiplier for $\cF(X)$. Define $T:\cF(X)\to\cF(X)$ as in \eqref{eq3.4}. Then there exists a constant $\gamma_\cF\in [0,1)$ depending on $m$, the indices defining $\cF(X)$, $\Lip(L_i)$, and $\n{s_i}_{L^\infty}$ such that $\n{T f} \leq \gamma_{\cF}\; \n{f}$, for all $f\in\cF(X)$. Hence, $T$ has a unique fixed point $\psi\in\cF(X)$ which is referred to as a fractal function of class $\cF(X)$. 
\end{theorem}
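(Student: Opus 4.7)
The plan is to invoke the Banach Fixed Point Theorem on the Banach space $\cF(X)$, after verifying two things: that $T$ maps $\cF(X)$ into itself, and that $T$ is a strict contraction. Well-definedness uses the fact that each $L_i^{-1}:X_i\to X$ is bi-Lipschitz and that each of the listed function classes is stable under composition with bi-Lipschitz homeomorphisms of $\R^m$-subsets, with norm bounds depending only on $\Lip(L_i)$, $\Lip(L_i^{-1})$, and the indices defining $\cF$; the multiplier hypothesis then ensures $s_i\circ L_i^{-1}\cdot f\circ L_i^{-1}\in \cF(X_i)$, and conditions \eqref{c1}--\eqref{c2} allow me to glue the $N$ pieces into an element of $\cF(X)$.

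For the contraction estimate I start from
\[
Tf - Tg = \sum_{i=1}^N (s_i\circ L_i^{-1})\cdot ((f-g)\circ L_i^{-1})\,\chi_{X_i},
\]
and use the essential disjointness of the $X_i$ to split the $\cF(X)$-norm into local norms on each $X_i$, which are then pulled back to $X$ by $L_i$. For $L^p(X)$ this is a direct change of variables producing a factor $\|s_i\|_{L^\infty}\,|\det DL_i|^{1/p}\leq \|s_i\|_{L^\infty}\,\Lip(L_i)^{m/p}$ on the $i$-th summand. For $C^k(X)$ and $C^{k,\alpha}(X)$ the chain and Leibniz rules applied to $(s_i\circ L_i^{-1})(f\circ L_i^{-1})$ introduce the expected factors $\Lip(L_i^{-1})^j\leq \Lip(L_i)^{-j}$ for $j\leq k$ (or $j\leq k+\alpha$), multiplied by $\|s_i\|_{L^\infty}$ and bounded derivatives of $s_i$. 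For the Sobolev-Slobodeckij, Besov, and Triebel-Lizorkin spaces I would employ the standard norm characterisations via iterated differences or atomic/Littlewood-Paley decompositions; each of these is compatible with bi-Lipschitz changes of variables and yields an analogous product bound involving the same three ingredients. Summation over $i$ produces
\[
\|Tf - Tg\|_{\cF(X)} \leq \gamma_\cF\,\|f-g\|_{\cF(X)},
\]
with $\gamma_\cF$ an explicit function $\Phi(m;\,\text{indices of }\cF;\,(\Lip(L_i));\,(\|s_i\|_{L^\infty}))$ which, under the implicit smallness assumption on the $\|s_i\|_{L^\infty}$ encoded in the statement $\gamma_\cF<1$, is strictly less than $1$.

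The main obstacle is the smoothness-space case, especially the Besov and Triebel-Lizorkin families: the chain rule brings in powers of $\Lip(L_i^{-1})\geq 1$ that must be absorbed into the smallness of $\|s_i\|_{L^\infty}$ and the Jacobian gain, and one has to verify that assembling the locally contractive pieces on the $X_i$ actually produces a contraction in the \emph{global} norm on $X$, not merely a piecewise one. This is precisely the technical work carried out in \cite{PRM,massopust,massopust1,m2,m3}, which exploits the separation between distinct $X_i$ to control cross-piece difference quotients by terms already dominated in the local estimates. Once $\gamma_\cF<1$ is in hand, Banach's theorem immediately delivers a unique $\psi\in\cF(X)$ satisfying $T\psi=\psi$, which is the sought fractal function of class $\cF(X)$.
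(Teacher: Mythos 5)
Your proposal is correct and follows essentially the same route as the paper, which in fact offers no proof of this theorem at all but simply cites \cite{PRM,massopust,massopust1,m2,m3} for it; your outline (well-definedness via bi-Lipschitz stability of each $\cF(X)$, piecewise contraction estimates glued over the disjoint $X_i$, then the Banach Fixed Point Theorem) is precisely the argument carried out in those references, and the per-space constants you derive match the explicit values of $\gamma_{\cF(X)}$ listed in Section \ref{sec4} for $m=1$. You also correctly identify that $\gamma_\cF<1$ is an implicit smallness condition on the $\n{s_i}_{L^\infty}$ rather than an automatic consequence, which the paper itself concedes when it writes ``The conditions are $\gamma_{\cF(X)}<1$.''
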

\section{Clifford-Valued Fractal Interpolation}\label{sec4}
In this section, we introduce the novel concept of Clifford-valued fractal interpolation. To this end, we refer back to Section \ref{sec2} and the definition of $X$ and $\cF(X)$. 

We consider here only the case $m=1$ and leave the extension to higher dimensions to the diligent reader. According to which function space $\cF(X)$ represents, $X$ is either an open, half-open or closed interval of finite length.

Assume that there exist $N$, $1<N\in\N$, nontrivial contractive injections $L_i:X\to X$ such that $\{L_1( X), \ldots, L_N( X)\}$ forms a partition of $ X$, i.e, that
\begin{enumerate}
\item[(P1)]\label{P1} $L_i( X) \cap L_j( X) = \emptyset$, for $i\neq j$;
\item[(P2)]\label{P2} $ X = \bigcup\limits_{i\in\N_N} L_i( X)$.
\end{enumerate}
As above, we write $X_i := L_i(X)$, $i\in\N_N$.

On the spaces $\cFn$, we define an RB operator $\wt{T}$ as follows. Let $f\in\cFn$ with $f = \sum\limits_{A\subseteq\N_n} f_A e_A$, where $f_A\in\cF(X)$, for all $A\subseteq\N_n$. Let $T:\cF(X)\to\cF(X)$ be an RB operator of the form \eqref{eq3.4}. Then,
\be\label{eq4.1}
\wt{T}f := \sum\limits_{A\subseteq\N_n} T(f_A) e_A\in\cFn,
\ee
provided that $T(f_A)\in\cF(X)$ for all $A\subseteq\N_n$. Under the latter assumption and the supposition that $T$ is contractive on $\cF(X)$ with Lipschitz constant $\gamma_{\cF(X)}$, we obtain for $f,g\in\cFn$
\begin{align*}
\n{\wt{T}f - \wt{T}g}^2 &= \sum\limits_{A\subseteq\N_n} \n{Tf_A - Tg_A}_{\cF(X)}^2\\
&\leq \gamma_{\cF(X)}^2 \sum\limits_{A\subseteq\N_n} \n{f_A - g_A}^2_{\cF(X)}\\
&= \gamma_{\cF(X)}^2 \n{f-g}^2.
\end{align*}
Hence, $\wt{T}$ is also contractive on $\cFn$ and with the same Lipschitz constant $\gamma_{\cF(X)}$.

The following diagram illustrates the above approach. 
\be\label{diagram}
\begin{CD}
\cF(X) @>T>> \cF(X)\\
@VV\otimes_\R \R_nV                  @VV\otimes_\R \R_nV\\
\cFn @>\wt{T}>>  \cFn
\end{CD}
\ee

The next theorem summarizes the main result.
\begin{theorem}\label{thm4.1}
Let $X\subset\R$ be as mentioned above. Further, let nontrivial injective contractions $L_i:X\to X$, $i\in\N_N$, be given such that (P1) and (P2) are satisfied. Let $\cF(X)$ be any one of the function spaces defined in Section \ref{sec2}. 

On the space $\cFn = \cF(X)\otimes_\R \R_n$ define an RB operator $\wt{T}:\cFn$ $\to\cFn$ by
\[
\wt{T} f := \sum\limits_{A\subseteq\N_n} T(f_A) e_A,
\]
where $T:\cF(X)\to\cF(X)$ be an RB operator of the form \eqref{eq3.4}.

If $T:\cF(X)\to\cF(X)$ is a contractive RB operator on $\cF(X)$ with Lipschitz constant $\gamma_{\cF(X)}$, then $\wt{T}$ is also contractive on $\cFn$ with the same Lipschitz constant. 

Furthermore, the unique fixed point $\psi\in\cFn$ satisfies the Clifford-valued self-referential equation 
\[
\psi (L_i (x)) = q_i (x) + s_i (x) \psi (x), \quad x\in X,\ i\in \N_N.
\]
\end{theorem}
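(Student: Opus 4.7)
My plan is to exploit the tensor product decomposition $\cFn = \cF(X) \otimes_\R \R_n$ and reduce every assertion to the scalar statement for $T$ applied componentwise. The argument splits into four short steps, of which two are already carried out in the display just preceding the theorem statement.

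\textbf{Well-definedness and contractivity.} First, I would observe that $\wt{T}$ maps $\cFn$ into itself: if $f = \sum_A f_A e_A \in \cFn$ with $f_A \in \cF(X)$, then by hypothesis each $T(f_A) \in \cF(X)$, so the characterization \eqref{eq2.3} places $\wt{T} f = \sum_A T(f_A) e_A$ back in $\cFn$. For the contraction estimate I would reuse the computation already displayed before the theorem: because the norm on $\cFn$ is the $\ell^2$-combination of the component norms indexed by $A \subseteq \N_n$, applying the scalar contraction inequality in each summand yields $\n{\wt{T} f - \wt{T} g} \leq \gamma_{\cF(X)} \n{f - g}$ for all $f,g \in \cFn$, with the same constant.

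\textbf{Fixed point and self-reference.} Next, since $\cF(X)$ is a Banach space and $\R_n$ is a finite-dimensional real algebra of dimension $2^n$, the tensor product $\cFn$ equipped with the norm of Section \ref{sec2} is itself a Banach space, so the Banach Fixed Point Theorem supplies a unique $\psi \in \cFn$ with $\wt{T} \psi = \psi$. Writing $\psi = \sum_A \psi_A e_A$, the fixed-point identity becomes $\sum_A T(\psi_A) e_A = \sum_A \psi_A e_A$; uniqueness of the Clifford decomposition then forces $T \psi_A = \psi_A$ for every $A \subseteq \N_n$. Unwinding the scalar definition \eqref{eq3.4} of $T$ on each piece $X_i$ gives the scalar self-referential relation for each component $\psi_A$, and multiplying by $e_A$ and summing over $A$ recovers the Clifford-valued equation $\psi(L_i(x)) = q_i(x) + s_i(x)\psi(x)$ on $X_i$.

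\textbf{Expected main difficulty.} There is no genuine technical obstacle here: the operator $\wt{T}$ is defined precisely so that it commutes with the Clifford decomposition, whence every property transfers termwise from $T$ to $\wt{T}$. The two points that merit explicit comment are the completeness of $\cFn$, which one gets for free once $\R_n$ is recognized as a finite-dimensional coefficient algebra, and the appeal to uniqueness of the decomposition \eqref{eq2.3} that converts a single Clifford-valued fixed-point equation into $2^n$ decoupled scalar ones.
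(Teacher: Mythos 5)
Your proposal is correct and follows essentially the same route as the paper, whose proof simply points back to the componentwise contraction estimate displayed before the theorem statement. You in fact supply more detail than the paper does (completeness of $\cFn$, uniqueness of the Clifford decomposition to decouple the fixed-point equation into $2^n$ scalar ones, and the derivation of the self-referential identity), all of which is consistent with the intended argument.
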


\begin{proof}
The validity of these statements follows directly from the above elaborations.
\end{proof}

For the sake of completeness, we now list the Lipschitz constants $\gamma_{\cF(X)}$ for the functions spaces listed in Section \ref{sec2} in the case $m=1$. The conditions are $\gamma_{\cF(X)} < 1$. Note that the expressions are different for the case $m>1$.

\begin{enumerate}
\item $C^k (X)$: $\gamma_{C^k(X)} = \max\{\Lip (L_i)^{-(k+1)} \n{s_i}_{L^\infty} : i\in\N_N\}$.
\ml
\item $C^{k,\alpha}(X)$: $\gamma_{C^{k,\alpha} (X)} = \max\{\Lip (L_i)^{-(k+\alpha)} \n{s_i}_{L^\infty} : i\in\N_N\}$.
\ml
\item $L^p(X)$: $\gamma_{L^p(X)} = \displaystyle{\sum\limits_{i\in\N_N}} \Lip (L_i) \n{s_i}^p_{L^\infty}$.
\ml
\item $W^{s,p}(X)$: $\gamma_{W^{s,p}(X)} = \displaystyle{\sum\limits_{i\in\N_N}} \Lip (L_i)^{1 - s p} \n{s_i}^p_{L^\infty}$.
\ml
\item $B^s_{p,q}(X)$: $\gamma_{B^s_{p,q}(X)} = \displaystyle{\sum\limits_{i\in\N_N}} \Lip (L_i)^{(1/p - s) q} \n{s_i}^q_{L^\infty}$.
\ml
\item $F^s_{p,q}(X)$: $\gamma_{F^s_{p,q}(X)} = \displaystyle{\sum\limits_{i\in\N_N}} \Lip (L_i)^{1 - s p} \n{s_i}^p_{L^\infty}$.
\ml
\end{enumerate}

The geometric interpretation of $\wt{T}$ lies at hand: Each of the functions $f_A$ is contracted by $T$ along the direction in $\R_n$ determined by $e_A$. There is no mixing taking place between different directions. This provides a holistic representation of features necessitating such a structure as, for instance, multichannel data or multicolored images.

\section{Paravector-Valued Functions}\label{sec5}

An important subspace of $\R_n$ is the space of paravectors. These are Clifford numbers of the form $x = x_0 + \sum\limits_{i=1}^n x_i e_i$. The subspace of paravectors is denoted by $\A_{n+1} := \Span_\R\{e_0, e_1, \ldots, e_n\} = \R\oplus \R^n$. Given a Clifford number $x\in \R_n$, we assign to $x$ its paravector part by means of the mapping $\pi: \R_n\to \A_{n+1}$, $x \mapsto x_0 + \sum\limits_{i=1}^n x_i e_i$. 

Note that each paravector $x$ can be identified with an element $(x_0, x_1, \ldots, $ $x_n) =: (x_0, \bx)\in \R\times \R^n$. For many applications in Clifford theory, one therefore identifies $\A_{n+1}$ with $\R^{n+1}$. Although as point sets, these two sets are identical but differ considerably in their algebraic structures. For instance, every $x\in \A_{n+1}$ has an inverse whereas there is no such object for a vector $v\in\R^{n+1}$.

We also notice that $\A_{n+1}$ is not necessarily closed under multiplication unless a multiplication table \cite{Hyper} is defined or $n=3$,  in which case $\A_4 = \bH$, the non-commutative division algebra of quaternions. $\A_{n+1}$ endowed with a multiplication table produces in general a nonassociative noncommutative algebra. See \cite{A} for a suitability investigation of such algebras in the area of digital signal processing.

The scalar part, $\Sc$, and vector part, $\Ve$, of a paravector $\A_{n+1}\ni x = x_0 + \sum\limits_{i=1}^n x_i e_i$ is given by $x_0$ and $\boldsymbol{x} = \sum\limits_{i=1}^n x_i e_i$, respectively. 

Given a Clifford number $x\in \R_n$, we assign to $x$ its paravector part, $\PV(x)$, by means of the mapping $\pi: \R_n\to \A_{n+1}$, $x \mapsto x_0 + \sum\limits_{i=1}^n x_i e_i =: \PV(x)$.

A function $f:\A_{n+1}\to\A_{n+1}$ is called a paravector-valued function. Any such function is of the form 
\be\label{eq5.1}
f(x) = f_0 (x) + \sum\limits_{i=1}^n f_i (x) e_i, 
\ee{align}
where $f_a:\R\times\R^{n}\to\R$, $a\in\{0,1,\ldots, n\}$. The expression \eqref{eq5.1}  for a paravector-valued function can also be written in the more succinct form 
\[
f (x + \bx) = f_0(x_0, |\bx|) + \omega (\bx)f_1(x_0,|\bx|),
\]
where now $f_0, f_1:\R\times\R^n\to\R$ and $\omega (\bx) := \frac{\bx}{|\bx|}\in \mbS^n$ with $\mbS^n$ denoting the unit sphere in $\R^n$. For some properties of  paravector-valued functions, see, for instance \cite{GS,sproessig}.

Prominent examples of paravector-valued functions are for instance the exponential and sine functions \cite{sproessig} for $x\in \A_{n+1}$:
\begin{align*}
\exp (x) =& \exp(x_0)\left(\cos \abs{\bx} + \omega(\bx) \sin \abs{\bx}\right),\\
\sin (x) = & \sin x_0 \cosh \abs{\bx} + \omega(\abs{x}) \sinh \abs{\bx}.
\end{align*}

A large class of paravector-valued functions is given by right-linear linear transformations. To this end, let $M_k (\A_{n+1})$ be the right module of $k\times k$-matrices over $\A_{n+1}$. Every element $H = (H_{ij})$ of $M_k (\A_{n+1})$ induces a right linear transformation $L: \A_{n+1}^k\to \R_n^k$ via $L(x) = H x$  defined by $L(x)_i = \sum\limits_{j=1}^k H_{ij} x_j$, $H_{ij}\in \A_{n+1}$. To obtain an endomorphism $\cL:\A_{n+1}^k\to\A_{n+1}^k$, we set $\cL(x)_i := \pi(L(x)_i)$, $i=1, \ldots, k$. In this case, we write $\cL = \pi\circ L$. For example, if $n:=3$ (the case of real quaternions) $L: \A_{4}^k\to \A_{4}^k$ and thus $\cL = L$.

Theorem \ref{thm4.1} applies also to paravector-valued functions and thus provides a framework for paravector-valued fractal interpolation as well and relevant associated function spaces for appropriate $X$ are defined in an analogous fashion as above. 

To this end, let $\cF(X)$ be for instance one of the function spaces listed in Section \ref{sec2}. Then, 
\[
\cF(X, \A_{n+1}) := \cF(X)\otimes_\R \A_{n+1}
\]
and an element $f$ of $\cF(X, \A_{n+1})$ has therefore the form
\[
f = \sum_{k=0}^n f_k e_k.
\]
Theorem \ref{thm4.1} then asserts the existence of a paravector-valued function $\psi\in \cF(X,\A_{n+1}$ of self-referential nature:
\[
\psi (L_i (x)) = q_i (x) + s_i (x) \psi (x), \quad x\in X,\ i\in \N_N,
\]
where the functions $q_i$ and $s_i$ have the same meaning as in Section \ref{sec4}.
\section{Brief Summary and Further Research Directions}
In this short note, we have initiated the investigation of fractal interpolation into a hypercomplex setting. The main idea was to define fractal interpolants along the different directions defined by a Clifford algebra $\R_n$ and use the underlying algebraic structure to manipulate the hypercomplex fractal object to yield another hypercomplex fractal object. 

There are several extensions of this first initial approach:
\begin{enumerate}
\item Define -- under suitable conditions -- RB operators acting directly on appropriately defined function spaces $\cF(X, \R_n)$ instead of resorting to the ``component'' RB operators.
\item Provide a \emph{local} version of the defined hypercomplex fractal interpolation in the sense first defined in \cite{BH} and further investigated in, i.e., \cite{bhm,m2}.
\item Construct \emph{nonstationary} approaches to Clifford-valued fractal interpolation in the spirit of \cite{m4}.
\item Extend the notion of hypercomplex fractal interpolation to systems of function systems as described in \cite{DLM,LDV}.
\end{enumerate}
\end{document}